\newcommand{\CC}{\ensuremath{\mathbb{C}}\xspace}
\newcommand{\PP}{\ensuremath{\mathbb{P}}\xspace}
\newcommand{\QQ}{\ensuremath{\mathbb{Q}}\xspace}
\newcommand{\RR}{\ensuremath{\mathbb{R}}\xspace}
\theoremstyle{definition}
\newtheorem{remark}{Remark}
\DeclareMathOperator{\PGL}{PGL}
\DeclareMathOperator{\codim}{codim}
\DeclareMathOperator{\Rank}{Rank}
\newtheorem{theorem}{Theorem}
\newtheorem{prop}[theorem]{Proposition}
\newtheorem{corollary}[theorem]{Corollary}
\newtheorem{lemma}[theorem]{Lemma}
\newtheorem{example}{Example}
\newtheorem{definition}{Definition}
\begin{document}

\title{Matrix Representations by Means of Interpolation}

\author{Ioannis Z.~Emiris$^{1,2}$, Christos Konaxis$^{1,2}$,
        Ilias S.~Kotsireas$^{1,3}$,\\ and Cl\'ement Laroche$^{1,2}$
        \\[1em]
        $^1$Department of Informatics and Telecommunications,\\
	National Kapodistrian University of Athens, Greece\\ 
       $^2$ATHENA Research Innovation Center, Maroussi, Greece\\
       $^3$Department of Physics and Computer Science,\\
       Wilfrid Laurier University, Waterloo, Canada
        }

\maketitle

\begin{abstract}
We examine implicit representations of parametric or point cloud models, based on interpolation matrices,
which are not sensitive to base points.
We show how interpolation matrices can be used for ray shooting 
of a parametric ray with a surface patch, including the case of high-multiplicity intersections. 
Most matrix operations are executed during pre-processing since they solely depend on the surface.
For a given ray, the bottleneck is equation solving. Our Maple code handles 
bicubic patches in $\le 1$~sec, though numerical issues might arise.
Our second contribution is to extend the method to parametric space curves and, generally,
to codimension $> 1$, by computing the equations of (hyper)surfaces intersecting precisely at the given object. 
By means of Chow forms, we propose a new, practical, randomized
algorithm that always produces correct output but possibly with a non-minimal number of surfaces. 
For space curves, we obtain 3 surfaces whose polynomials are of near-optimal degree; in this case, computation 
reduces to a Sylvester resultant.
We illustrate our algorithm through a series of examples and compare our Maple prototype with other methods implemented in Maple
i.e., Groebner basis and implicit matrix representations.
Our Maple prototype is not faster but yields fewer equations and seems more robust than Maple's {\tt implicitize};
it is also comparable with the other methods for degrees up to 6.

\bigskip\noindent
\textit{Key\;words:}\/
Matrix representation;\, sparse resultant;\, Chow form;\, ray shooting;\, randomized algorithm;\, implicitization;\, space curve;\, Maple implementation
\end{abstract}

%
%

\section{Introduction}\label{Sintro}

In manipulating geometric objects, it is essential to possess robust
algorithms for changing representation. This paper considers three
fundamental representations, namely implicit, 
parametric (possibly with base points), and point clouds.
We offer two  
results on algorithms operating on the latter two representations. 
Our algorithms construct implicit equations or implicit matrix representations of 
parametric curves and (hyper)surfaces by means of
some type of interpolation through points.
In general, approaches to implicitization include resultants,
Gr\"obner bases, moving lines and surfaces, and interpolation techniques.

Implicit matrix representations are quite robust, since they do
not require developing the implicit equation; instead, they
reduce geometric operations on the object to matrix algebra, e.g.\
\cite{EmKaKoGmod,BusLBa12}.  
Our matrix representation uses interpolation matrices.
The method has been developed for plane curves, and (hyper)surfaces.
The columns of the matrix are indexed by a superset
of the implicit monomials, i.e., those in
the implicit polynomial with non-zero coefficient \cite{EmKaKoLBspm}.
This monomial set is determined quite tightly for parametric models,
by means of the sparse resultant of the parametric polynomials,
thus exploiting the sparseness of the parametric and implicit polynomials.
When the object is given as a point cloud, this superset is taken to be
all monomials with total degree $\le \delta$, where $\delta$ 
is an educated guess of the total degree of the implicit polynomial.

Our contribution is twofold.
First, we reduce certain geometric operations, most notably ray shooting,
to univariate solving, after preprocessing by means of matrix operations.
To check whether the computed intersections of the ray with the implicit surface lie
on the given parametric patch, we employ inversion by means of bivariate solving.
Our method is not sensitive to intersections of high multiplicity, nor to surfaces with base points,
and it can be applied to objects given either parametrically or as a point cloud.
But it exhibits numerical issues when the interpolation matrix dimension is large,
say of a few hundreds.
On the other hand, the straightforward method of solving the system 
by equating the parametric expressions to the ray coordinates typically
exhibits robustness issues, especially at the presence of base points.
We present experiments based on an open-source, Maple implementation.

Second, we extend the method to varieties of codimension higher than~1.
Given a variety in parametric form, and starting with the powerful and classic theory of Chow forms,
which generalizes resultant theory, we design an original, practical
algorithm that uses randomization in order to
compute implicit hypersurfaces containing the variety.
The question is reduced to computing a (sparse) resultant.
For space curves in 3D-space, our method yields
a small number of implicit polynomials defining the curve set-theoretically.
Their degree is near to the optimal degree and
the algorithm is significantly simpler and is expected to be
much faster than computing the optimal-degree polynomials.
Self-intersections do not affect the degree of the output.
\begin{figure}[ht]
\includegraphics[scale=0.55]{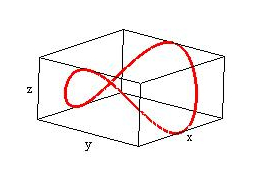}
\includegraphics[scale=0.28]{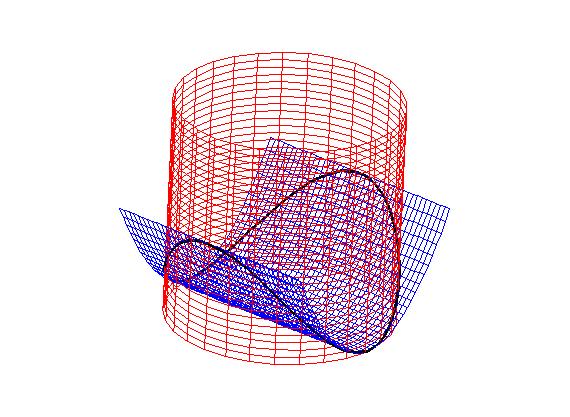}
\centering\caption{Left: The space curve of Exam.~\ref{Exam:2cylinders}.
Right: The two implicit surfaces defining the curve.\label{F2cylinders}}
\end{figure}

The rest of the paper is organized as follows:
Sec.~\ref{Sprevious} overviews previous work.
Sec.~\ref{Srayshoot} examines ray shooting when the surface is
represented by interpolation matrices.
Sec.~\ref{Schow} employs the {Chow form} to handle higher codimension varieties.
We conclude with future work and open questions.


\section{Previous work}\label{Sprevious}

A direct method to reduce implicitization to linear algebra is to construct a square 
matrix $M$, indexed by all possible monomials in the implicit equation
(columns) and different values (rows) at which all monomials get evaluated.
The vector of coefficients of the implicit equation is in the kernel of $M$,
even in the presence of base points.
This idea has been extensively used
\cite{CGKW00,Dokk01,MarMar02,BajIhm89}. 
In \cite{EmKaKoLBspm,EmKaKoGmod},
sparse elimination theory is employed to predict the implicit monomials 
and build the interpolation matrix.

Resultants, and their matrix formulae, have been used to express
the implicit surface equation, e.g., in \cite{MaCa92a}, under the assumption of no base points.
This technique reduces ray shooting (and surface-surface intersection)
to substituting the parameterization of one object into the resultant matrix,
then solving the resulting univariate polynomial.

Our method, unlike previous methods, works also for objects defined by
point clouds. Moreover, the interpolation matrices are mostly numeric,
except for one symbolic row that captures the second object, namely the
ray or the second surface.
This allows for several preprocessing operations to be executed numerically.
In \cite{EmKaKoGmod}, Membership and Sidedness predicates are expressed
in terms of operations on the interpolation matrix.
This approach is extended to ray shooting here.

A modern method for representing implicit equations by matrices is based
on the theory of syzygies, e.g., \cite{SedChe95,Buse14}, and handles base points.
It yields matrices whose entries are linear polynomials in the implicit variables
and they indirectly represent implicit objects: their rank drops exactly on the curve or surface.
They allow for geometric operations, such as surface-surface intersection \cite{BusLBa12} and, more
recently, ray shooting \cite{AlBuDoSh16}, to be executed by linear algebra operations.
Their advantage is that the matrices are much smaller than interpolation
matrices, and allow for inversion by solving an eigenproblem on these matrices.

Chow forms have been studied in computational algebra, in
particular for varieties of codimension $\ge 2$, since they provide a
method to describe the variety by a single polynomial, e.g.\
\cite{GKZ,DalStu95}. 
Given a $d$-dimensional irreducible variety $V$ in $n$-dimensional space,
and $d+1$ linear forms $H_i$ on $V$, the {\em Chow form} $R_V$ of $V$ is a
polynomial whose variables are the coefficients of the linear forms and
that vanishes whenever the linear forms vanish on $V$.

For example, the Chow form of a space curve in projective 3D-space is a polynomial in the
indeterminates $u_0,u_1,u_2,u_3, v_0,v_1,v_2,v_3$ that vanishes whenever
the planes
\begin{align}\label{Eq:Planesspacecurve}
\nonumber & H_0 = u_0 x_0 + u_1 x_1 + u_2 x_2 + u_3 x_3 = 0,\\[-6pt]
&\\[-6pt]
\nonumber & H_1 = v_0 x_0 + v_1 x_1 + v_2 x_2 + v_3 x_3 = 0,
\end{align}
intersect on the curve.

There are many ways to describe $R_V$. The most common is by using the maximal minors
of the $(d+1)\times (n+1)$ matrix whose rows are the normals to the hyperplanes $H_i$, see Sec.~\ref{SSspacecurves}.
These are known as {\em Pl\"ucker coordinates} or {\em brackets} and are the variables of $R_V$.
Equivalently, the {\em dual Pl\"ucker coordinates} or {\em dual brackets} can be used;
these are the maximal minors of a $(n-d) \times (n+1)$ matrix whose rows are $n-d$ points that span the 
intersection $L$ of the hyperplanes $H_i$.
There are algorithms to recover the implicit or affine equations of $V$ from $R_V$ \cite{GKZ,Stu08}. Here we 
adopt a practical method that avoids conversion but may yield a multiple
of $R_V$. Our algorithm requires a parametric representation of $V$.


\subsection{Interpolation matrices}\label{Smatrixmethod}

This section describes implicitization 
by an interpolation matrix $M$ for a plane curve or a (hyper)surface
given either in parametric form, or, as a point cloud.
If the input object is \emph{affinely} parameterized by rational functions
\begin{equation}\label{Eaff_parametrization}
x_i=f_i(t), \,\, i=1,\ldots,n,\,\, t=(t_1,\ldots,t_{n-1})
\end{equation}
then it is possible to predict the implicit monomials.
This set is included in the \emph{predicted (implicit) polytope} computed
by software \texttt{ResPol}\footnote{\url{http://sourceforge.net/projects/respol}} \cite{EmFiKoPeJ}. 
If the input is a point cloud, we consider a coarse estimation of
the monomial set by guessing the total degree of the variety
and taking all the monomials of that degree or lower.

The set $S$ of monomials is used to construct a numerical matrix $M$,
expressing a linear system
whose unknowns are the coefficients $c_i$ of the monomials
with exponents in $S$ in the implicit polynomial, as discussed above.
To obtain the linear system in the $c_i$ we substitute $x_i$ 
by its rational parametric expression $x_i=f_i(t)$
into equation $\sum_{i=1}^{|S|} c_i {x}^{a_i}=0$, \, $x^{a_i}:=x_1^{a_{i1}}\cdots{}x_n^{a_{in}}$,
and evaluate the parameters $t$ at \emph{generic} points 
(randomized in practice) $\tau_k \in \CC^{n-1}, ~ k = 1,\dots, \mu,~ \mu\geq |S|$, 
avoiding values that make the denominators of the
parametric expressions close to~0.

Letting $m_i=m_i(t)$ denote the monomial $x^{a_i}$ after substituting $x_i$ by its parametric expression
in \eqref{Eaff_parametrization},
and $m_i |_{t=\tau_k}$ its evaluation at $t=\tau_k$, we end up with a matrix $M$ of the form:
\begin{equation}\label{Eimatrix}
 M = \begin{bmatrix}
m_1 |_{t=\tau_1} & \cdots & m_{|S|} |_{t=\tau_1}\\[3pt]
\vdots                     & \cdots &  \vdots   \\[3pt]
m_1 |_{t=\tau_{\mu}} & \cdots & m_{|S|} |_{t=\tau_{\mu}}
\end{bmatrix}.
\end{equation}
Typically $\mu=|S|$ for performing exact kernel computation, and
$\mu= 2|S|$ for approximate numeric computation.
When constructing matrix $M$ it is assumed that the parametric hypersurface
is sampled sufficiently gen\-erically by evaluating the parametric expressions at
random points $\tau_k \in \CC^{n-1}$. 

Let us recall some further properties.
Let $M'$ be the $(|S|-1)\times{}|S|$ numeric matrix obtained by
evaluating the monomials $S$ at $|S|-1$ points $\tau_k,\, k=1\ldots,|S|-1$.
We obtain the $|S|\times{}|S|$ matrix $M(x)$, which is numeric except for its last row, 
by appending the row of monomials $S$ to matrix $M'$:
\begin{equation}\label{Ematx}
M(x) = \begin{bmatrix} M' \\[2pt] {S}(x) \end{bmatrix}.
\end{equation}

Notice that matrices $M'$, $M$ and $M(q)$, for $q$ a point lying on the hypersurface 
have the same kernel of corank $r$.
Matrix $M(x)$ has an important property:

\begin{lemma}\label{Lmatrix} {\rm\cite[Lem.7]{EmKaKoGmod}}
Assuming $M'$ is of full rank, then $\det M(x)$ equals
the implicit polynomial $p(x)$ up to a constant.
\end{lemma}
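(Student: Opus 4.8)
The plan is to show that $\det M(x)$, viewed as a polynomial in $x=(x_1,\dots,x_n)$, is a nonzero scalar multiple of the implicit polynomial $p(x)$. First I would observe that $\det M(x)$, expanded along its last row ${S}(x)=(x^{a_1},\dots,x^{a_{|S|}})$ by cofactors, is a polynomial of the form $\sum_{i=1}^{|S|} \lambda_i\, x^{a_i}$, where $\lambda_i=(-1)^{|S|+i}\det(M'_i)$ is the signed minor of $M'$ obtained by deleting its $i$-th column; thus $\det M(x)$ is a linear combination of exactly the monomials in $S$, which is the monomial support in which $p(x)$ is known to lie. Since $M'$ has full rank $|S|-1$, at least one of these minors $\lambda_i$ is nonzero, so $\det M(x)$ is not identically zero.

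Next I would argue that $\det M(x)$ vanishes on the hypersurface. The key point is that for any parameter value $\tau$ (in the domain where the $f_i$ are defined), substituting the point $q=(f_1(\tau),\dots,f_n(\tau))$ into $\det M(x)$ makes the last row ${S}(q)$ equal to the row $(m_1|_{t=\tau},\dots,m_{|S|}|_{t=\tau})$, which is exactly the shape of a row of $M$. Hence, by the property quoted just before the lemma — that $M$, $M'$ and $M(q)$ share the same kernel and in particular $M(q)$ is rank-deficient for $q$ on the hypersurface — we get $\det M(q)=0$ for every such $q$. Letting $\tau$ range over the parameter domain, $\det M(x)$ vanishes on a Zariski-dense subset of the (irreducible) parametric hypersurface, hence on the whole hypersurface $V(p)$. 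By the Nullstellensatz, since $p$ is (up to the usual assumption) the irreducible/squarefree defining equation, $p \mid \det M(x)$.

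Finally I would compare degrees or supports to upgrade divisibility to equality up to a constant. Because $\det M(x)$ is supported on $S$, and $S$ is (by construction) the predicted implicit monomial set containing the support of $p$ but no larger, the quotient $\det M(x)/p(x)$ must be a constant: any nonconstant cofactor would push the support of the product outside $S$ (or, if one prefers a degree argument, would exceed the total degree bound encoded in $S$). Since $\det M(x)\not\equiv 0$, this constant is nonzero, giving $\det M(x)=c\,p(x)$ with $c\neq 0$.

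The main obstacle is the genericity bookkeeping: one must be careful that the random sample points $\tau_k$ used to build $M'$ are indeed generic enough that $M'$ attains full rank $|S|-1$ (this is the hypothesis of the lemma, so it can be assumed) and, more subtly, that the support of $\det M(x)$ is not accidentally smaller than $S$ in a way that breaks the support-comparison step — but since we only need $\mathrm{supp}(\det M(x))\subseteq S\supseteq\mathrm{supp}(p)$ together with $p\mid\det M(x)$ and $\det M(x)\neq 0$, the argument goes through regardless. The only genuinely delicate assumption, that $S$ contains the true implicit support and is tight enough to forbid extra factors, is exactly what the sparse-resultant / predicted-polytope construction recalled earlier guarantees, so I would cite that rather than reprove it.
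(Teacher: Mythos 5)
Your first steps are fine: the cofactor expansion along the last row shows $\det M(x)=\sum_i \lambda_i x^{a_i}$ with $\lambda_i$ the signed maximal minors of $M'$, full rank gives $\det M(x)\not\equiv 0$, and vanishing of $\det M(q)$ for $q$ on the parameterized hypersurface gives (with $p$ irreducible) $p \mid \det M(x)$. The genuine gap is the last step. The claim that a nonconstant cofactor ``would push the support of the product outside $S$'' is false in general: $S$ is only guaranteed to be a \emph{superset} of the implicit support (the predicted polytope contains the implicit polytope, and in the point-cloud case $S$ is all monomials of degree $\le\delta$), so $p\cdot h$ with $h$ nonconstant can perfectly well stay supported inside $S$. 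You flag this yourself and then discharge it by citing the predicted-support construction, but that construction does not guarantee tightness, so the divisibility-plus-support argument does not close.

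The step you are missing is that the conclusion follows from the full-rank hypothesis alone, with no tightness of $S$ needed. Observe that the cofactor vector $\lambda=(\lambda_1,\dots,\lambda_{|S|})$ satisfies $M'\lambda=0$: appending any row of $M'$ to $M'$ produces a matrix with a repeated row, and its determinant is exactly that row dotted with $\lambda$. Since $M'$ has full rank $|S|-1$, its kernel is one-dimensional; and the coefficient vector of $p$ (padded with zeros on $S\setminus\mathrm{supp}(p)$) lies in this kernel because the sample points lie on the hypersurface. Hence $\lambda$ is a nonzero scalar multiple of the coefficient vector of $p$, i.e.\ $\det M(x)=c\,p(x)$ with $c\neq 0$. (Incidentally, if $S$ were loose enough to admit a nonconstant multiple $p\,h$ supported in $S$, then $\ker M'$ would have dimension $\ge 2$ and $M'$ could not be of full rank, so the hypothesis itself rules out the scenario your support argument was trying to exclude.) Note also that the paper does not prove this lemma; it quotes it from the reference, where the argument is essentially this kernel/corank-one one rather than your divisibility-plus-support route.
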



\section{Ray shooting} \label{Srayshoot}

This section examines the geometric operation of
ray shooting implemented by means of linear algebra so as to avoid computation
of the implicit equation.
Our main motivation is to complete the properties of interpolation matrices
used in expressing the implicit form of objects initially given either
by a parameterization or a point cloud.
Our experimental results are preliminary but show that our approach can be
competitive to well-established methods possessing optimized implementations.

In particular, the representation by interpolation matrices
reduces ray shooting at a surface patch
to operations on the interpolation matrix and univariate equation solving.
An analogous approach applies to computing surface-surface intersections
by determining the implicit curve of the intersection in the plane of
the parameters of one surface,  discussed in Sec.~\ref{Sfuture}.

Given a parameterization as in \eqref{Eaff_parametrization},
the (hyper)surface is represented by an interpolation matrix
of the form~(\ref{Ematx}). The standard case of surfaces is $n=3$.
We substitute the parametric ray into the monomials $S(x)$ of the surface's
implicit support. This defines the last row of the interpolation
matrix.
Let the ray be given by
$$
x_i=r_i(\rho),\, i=1,\ldots,n, \; \rho>0,
$$
where the $r_i\in\RR[\rho]$ are linear polynomials in $\rho$.
The ray may lie in arbitrary dimension but again, the standard
3D case is $n=3$.
Starting with the square matrix $M$ in expression~(\ref{Ematx}),
we substitute $x_i$ by $r_i(\rho)$ in the monomials of the last row $S(x)$:
\begin{equation}
M(\rho) = \begin{bmatrix}
M' \\[2pt] S(\rho)
\end{bmatrix} ,
\end{equation}
where $S(\rho)$ is a row vector of monomials in $\rho$.
The intersection points correspond to the real roots of
polynomial equation $p(\rho)=\det M(\rho)=0$, which expresses the implicit equation
by Lem.~\ref{Lmatrix}, evaluated at $x_i=r_i(\rho)$. Of course, if the ray
is totally included in the (hyper)surface, then $p$ is identically $0$. In the following,
we assume that it is not the case.
Lastly, our algorithm uses inversion of the parameterization
to check whether the intersection points lie on the patch.

The structure of $M(\rho)$ allows us to perform numeric operations so as to obtain
another matrix with same determinant, that is easier to compute.
Our goal is to use some preprocessing time, without knowledge of the ray,
so as to accelerate the actual ray shooting computation.
This preprocessing can be considered as a better representation of the
surface, amenable to geometric operations.
We apply a PLU-decomposition or QR-decomposition on $M'$.

\subsection{Computation}

It shall be easier to discuss matrix operations on the transpose matrices,
namely $M^T$ and $(M')^T$, although mathematically equivalent with
operations on the original matrices.
Below, we employ the PLU-decomposition, but we have also
experimented with the QR-decomposition with analogous results.

First, consider the standard PLU-decomposition on $|S| \times |S|-1$
matrix $(M')^T$:
$$
(M')^T = PLU,
$$
where $P,L$ are square $|S|\times |S|$ permutation and lower-triangular
matrices, respectively, where $L$ has a unit diagonal,
and $U$ is a numeric rectangular $|S| \times |S|-1$
upper triangular matrix, with the last row equal to zero.
We use the above decomposition to derive an analogous decomposition
of $M(\rho)^T$.
Clearly, $P^{-1}=P^T$, hence we can write square $|S|\times |S|$ matrix
$L^{-1}P^{T} M(\rho)^T$ as follows:
\begin{equation}\label{Emm}
L^{-1}P^{T} \; [\, (M')^T\, | \; S(\rho)^T ] =
	[\, U\, | \; L^{-1}P^{T} S(\rho)^T ],
\end{equation}
where $[\, \cdot\, |\,\cdot\, ]$ indicates concatenation of matrices,
and $S(\rho)$ is a row vector, hence the right-hand side is clearly a
square $|S|\times |S|$ matrix.

Now, $\det M(\rho)= \det ( L^{-1}P^{T} M(\rho)^T )$ because $\det L=1$.
To compute the determinant, we use
the right-hand side of equation~(\ref{Emm}), which is a square upper
triangular matrix. Its diagonal is the diagonal of $U$ with one last
element equal to the last element of column $L^{-1}P^{T} S(\rho)^T$.
Since $U$ is numeric, the determinant equals,
up to a constant nonzero factor, the last element on the column
$L^{-1}P^{T} S(\rho)^T$.
Hence, it suffices to store the last row of $L^{-1}P^{T}$,
which depends on the surface, and
compute its inner product with vector $S(\rho)^T$, for any given ray.
This discussion establishes the following:

\begin{lemma}\label{Ldecomp}
Given $M(\rho)^T=[ (M')^T \, | \, S(\rho)]$ as defined above,
consider the PLU-decomposition $(M')^T=PLU$, which
can be computed at preprocessing,
and does not require knowledge of the particular ray.
It is possible to compute the determinant $\det M(\rho)$, up to
a nonzero scalar factor, as the inner product of the last row
of $L^{-1}P^{T}$ multiplied by column vector $S(\rho)^T$.
An analogous result follows if one computes the QR-decom\allowbreak position
$(M')^T=QR$. 
\end{lemma}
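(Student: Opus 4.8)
The plan is to take the PLU-decomposition $(M')^T = PLU$ as given by preprocessing and simply track what happens to the augmented matrix $M(\rho)^T = [\,(M')^T \mid S(\rho)^T\,]$ when we multiply on the left by $L^{-1}P^T$. First I would observe that since $P$ is a permutation matrix we have $P^{-1} = P^T$, and since $L$ is lower-triangular with unit diagonal it is invertible with $\det L = 1$; hence $\det L^{-1} = 1$ and $\det P^T = \det P = \pm 1$ is a nonzero constant independent of $\rho$. Therefore $\det M(\rho) = \det\bigl(L^{-1}P^T M(\rho)^T\bigr)$ up to the sign $\det P$, so it suffices to compute the determinant of the transformed matrix.

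Next I would compute that transformed matrix block by block, exactly as in equation~\eqref{Emm}: the first $|S|-1$ columns become $L^{-1}P^T (M')^T = L^{-1}(L U) = U$, and the last column becomes the vector $w(\rho) := L^{-1}P^T S(\rho)^T$. So the transformed matrix is $[\,U \mid w(\rho)\,]$, which is square of size $|S|\times|S|$. Now $U$ is upper-triangular with its last row zero (it is the $U$-factor of a PLU-decomposition of an $|S|\times(|S|-1)$ matrix), so $[\,U \mid w(\rho)\,]$ is itself upper-triangular: its diagonal entries are $u_{11},\dots,u_{|S|-1,|S|-1}$ followed by the last entry of $w(\rho)$. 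Expanding the determinant along the diagonal of a triangular matrix gives $\det[\,U \mid w(\rho)\,] = \Bigl(\prod_{i=1}^{|S|-1} u_{ii}\Bigr)\cdot w_{|S|}(\rho)$, where $w_{|S|}(\rho)$ is the last coordinate of $w(\rho)$, i.e.\ the inner product of the last row of $L^{-1}P^T$ with $S(\rho)^T$. The product $\prod u_{ii}$ is a fixed nonzero scalar: it is nonzero precisely because $M'$ (equivalently $(M')^T$) has full rank $|S|-1$ by the hypothesis inherited from Lem.~\ref{Lmatrix}. Combining with the sign $\det P$, we conclude $\det M(\rho)$ equals $w_{|S|}(\rho)$ up to a nonzero scalar, which is the claim.

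For the QR-version I would argue identically: write $(M')^T = QR$ with $Q$ orthogonal (or unitary) and $R$ upper-triangular of size $|S|\times(|S|-1)$ with zero last row; then $\det Q$ is a nonzero constant (of modulus $1$), and $Q^T M(\rho)^T = [\,R \mid Q^T S(\rho)^T\,]$ is again upper-triangular, so its determinant is a fixed nonzero multiple (namely $\prod R_{ii}$, nonzero by the full-rank assumption) of the last coordinate of $Q^T S(\rho)^T$. The only mild subtlety — the one place worth a sentence of care rather than an obstacle — is making sure the "last row zero" structure of $U$ (resp.\ $R$) is used correctly so that the augmented matrix is genuinely triangular; this is immediate from the shape of a PLU/QR factorization of a matrix with one more row than columns. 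Everything else is bookkeeping, so I do not anticipate a substantive obstacle; the content is really just the identity $\det M(\rho) = \det(L^{-1}P^T M(\rho)^T)$ together with triangular-determinant expansion.
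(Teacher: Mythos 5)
Your proposal is correct and follows essentially the same route as the paper's own argument: left-multiply the augmented matrix by $L^{-1}P^T$ to obtain $[\,U \mid L^{-1}P^T S(\rho)^T\,]$, observe it is upper triangular with numeric diagonal except for the last entry, and read off the determinant up to the nonzero constant $\det P \cdot \prod_i u_{ii}$. Your added remarks on the sign of $\det P$ and on the full-rank hypothesis guaranteeing $\prod_i u_{ii}\neq 0$ only make explicit what the paper leaves implicit.
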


Given $p(\rho)=\det M(\rho)$, we employ any state-of-the-art real solver
to approximate all positive real roots $\rho_0$.
For every root $\rho_j >0$, for some $j$, we obtain the intersection point
$x_i= r_i(\rho_j)$ in implicit space.
An alternative is to use solvers that only require values of the
polynomial $p(\rho)$ and, eventually, its derivative $p'(\rho)$,
such as the Newton-Raphson method.
We have experimented with this method and the results are comparable
in terms of numerical accuracy and speed.

The last step consists in computing
the corresponding vector $(t_1,t_2,\dots,t_{n-1})$ $\in\RR^{n-1}$ that maps
to this implicit point, by inverting the parameterization. Hence we check
whether the point lies on the patch of interest.
Assuming a normal and proper parameterization, there is a unique solution
$(t_1,t_2,\dots,t_{n-1})$ $\in\RR^{n-1}$ such that $r_i(\rho_j)=f_i(t_1,t_2,\dots,t_{n-1})$.
Since we are dealing with patches here, we may not want to consider the missing points of 
the parameterization if we do not have a normal (surjective) parameterization. In that situation, 
the normal parameterization hypothesis can be forgotten.

There are several ways to solve the inversion problem, including
methods for solving the overconstrained system
$r_i(\rho_j)=f_i(t),\, i=1,\dots,n$ for every $\rho_j$.
Alternatively, we have used real solving methods applied to 
the following well-constrained  system of $n$ polynomials
in $t_1,\dots,t_{n-1}$: $r_i(\rho_j)-f_i(t)=0$, for $i=1,\dots,n$.
Then, we substitute all real roots into the last polynomial
$r_{n}(\rho_j)-f_{n}(t)$, and keep the root that satisfies it,
or best satisfies it if approximate numerical computation is involved.
This approach works satisfactorily for $n=3$ in our experiments.

\subsection{Asymptotic and practical complexity}

Let us first consider the asymptotic bit complexity, if we omit preprocessing.
Let $D$ be the maximum total degree of any monomial in $S$, hence
polynomial $p(\rho)$ has degree $D$.
Real solving of this polynomial costs 
$O^*(D^{3}|S|)$, e.g.\ \cite{TsiEmiTcs}. 
Inversion is then required to check which solutions belong to the relevant
patch; it is reduced to bivariate solving and evaluation of the polynomials
$r_i(\rho_j)-f_i(t)$ defined by the parametric equations,
for every real root $\rho_j$ of $p(\rho)=0$.
If these polynomials have total degree $\delta$ in $s,t$ and bitsize $\tau$,
then each system is solved in bit complexity $O^*(\delta^{12}+\delta^{10}\tau^2)$,
see e.g.\ \cite{DiEmTs},
and this is repeated for each of the $O(D)$ roots $\rho_j$.
It turns out that this is the bottleneck in practice, which can
be explained by the analysis since $D$ is only a few times larger than $d$.
For example, the bicubic surface has $D=18$ and $\delta=6$.

We now experiment with code developed on top
of the basic method~\footnote{Maple implementation at \url{http://ergawiki.di.uoa.gr/experiments/simpl.mpl}},
running Maple~14 on a laptop with a 3~GHz Intel Core~i5 processor.
Runtimes are averaged over at least~3 runs.

\begin{example}
The standard benchmark of a bicubic surface has
an implicit equation of total degree~18, with a
support of~715 monomials.
Here, the predicted support is optimal.
However, matrix size makes computations such as PLU- or QR-decompositions
over $\QQ$ infeasible. 
We use floating-point numbers, but then numerical error becomes significant. 
A concrete ray we experimented with was
$ x_1=\rho-13,\; x_2 =12+\rho,\; x_3 =3-5\rho .  $
If we omit preprocessing, the total runtime is up to $0.12$~sec,
and the bottleneck is inversion.
\end{example}

Our method may be juxtaposed to the direct approach of
solving the $3\times 3$ system $f_i(t_1,t_2)=r_i(\rho)$, $i=1,2,3$,
which takes about $0.02$~sec on Maple
and is substantially more accurate numerically, on this example.
But numerical issues arise for the direct approach when
the parameterization has base points.
Both direct and our method handle intersection points with
high multiplicity easily.

Let us also compare with the matrix representation based on syzygies
and employed for ray shooting by \cite{AlBuDoSh16}.
This method is very robust and can handle bicubic surfaces with preprocessing and ray-shooting runtimes
both below 1~msec, using a C++ implementation and state-of-the-art libraries such as LAPACK and Eigen.
The second runtime may increase in the case of multiple intersection points, but base points
pose no limitation.
We have focused on methods potentially offering exact solutions if given
enough precision; hence, we have not compared to methods relying on
a triangular mesh since they are completely different and may need
to refine candidate solutions against the parametric patch.

To avoid numerical issues, our approach may employ exact rational computation
for the PLU- or QR-decompositions. This is feasible with medium-size
inputs: e.g., for the Moebious surface, a predicted support of size 21 allows for QR 
decomposition over the rationals and solving following Lem.\ref{Ldecomp} in 2 sec. Note that this surface
has complex base points which make certain real solvers to fail for simple rays.
On simpler examples, e.g., the crossed surface $x_1 = t_1 , \, x_2 = t_2 ,\; x_3 = t_1^2 t_2^2 ,$
with implicit equation $x_3 - x_1^2x_2^2$, the direct method is up to~2 times faster and
equally accurate as ours.


\section{Beyond hypersurfaces}\label{Schow}

This section examines the case of varieties of codimension greater than~1.
A fundamental tool in algebraic elimination is the Chow form of a variety.  
\begin{definition}\label{Dchow}
Let $V\subset \PP^n$  be a $d$-dimensional irreducible variety and
$H_0,\ldots,H_{d}$ be linear forms where
\begin{equation}\label{Eq:hyperplanes}
 H_i=u_{i0}x_0+\dots+u_{in}x_n, \quad i=0,\ldots,d
\end{equation}
and $u_{ij}$ are new variables, $0\leq i \leq d,\, 0\leq j \leq n$.
The Chow form $R_V$ of $V$ is a polynomial in the variables $u_{ij}$ such that
$$
R_V(u_{ij})=0 \Leftrightarrow V\cap \{ H_0=0,\ldots,H_d=0\}\neq \emptyset.
$$
\end{definition}

The intersection of the $d+1$ hyperplanes $H_i$ defined in
equation~\eqref{Eq:hyperplanes} is a
$(n-d-1)$-dimensional linear subspace $L$ of $\PP^n$.
The set of  $k$-dimensional linear subspaces $L$ of $\PP^n$
is the Grassmannian $G(k,n)$.
Those subspaces $L$ that intersect $V$ form a hypersurface $Z(V)$
on the Grassmannian $G(n-d-1,n)$ by \cite[Prop.2.2,p.99]{GKZ}.
The Chow form is the unique polynomial, up to a scalar factor, that defines $Z(V)$.
To recover $V$ from $Z(V)$ we have the following:

\begin{prop}{\em\cite[Prop.2.5,p.102]{GKZ}}\label{Pgkz}
A $d$-dimensional irreducible subvariety $V\subset\PP^{n}$ is uniquely
determined by its Chow form. More precisely, a point $\xi\in\PP^n$ lies in
$V$ if and only if any $(n-d-1)$-dimensional plane containing $\xi$ belongs to
the Chow form.
\end{prop}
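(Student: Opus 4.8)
The plan is to establish the set-theoretic characterization of $V$ in terms of its Chow form and then note that uniqueness follows formally. Write $Z(V)\subset G(n-d-1,n)$ for the hypersurface cut out by $R_V$, so that an $(n-d-1)$-plane $L$ ``belongs to the Chow form'' precisely when $L\cap V\neq\emptyset$, i.e.\ when the bracket coordinates of $L$ annihilate $R_V$; recall from \cite[Prop.2.2,p.99]{GKZ} that $Z(V)$ is a proper (indeed codimension-one) subvariety of the Grassmannian. One implication is immediate: if $\xi\in V$, then every $(n-d-1)$-plane $L$ containing $\xi$ meets $V$ at $\xi$, hence $L\in Z(V)$. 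For the converse I would argue by contraposition: assuming $\xi\notin V$, I would exhibit a single $(n-d-1)$-plane through $\xi$ that misses $V$ entirely, so that it does not belong to $Z(V)$.

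The construction uses the linear projection $\pi_\xi\colon \PP^n\setminus\{\xi\}\to \PP^{n-1}$ away from $\xi$. Since $\xi\notin V$, no line through $\xi$ lies inside $V$, so the fibers of $\pi_\xi|_V$ are finite and $W:=\pi_\xi(V)$ is again irreducible of dimension $d$, now contained in $\PP^{n-1}$. The $(n-d-1)$-planes $L$ of $\PP^n$ containing $\xi$ correspond bijectively to the $(n-d-2)$-planes $\bar L=\pi_\xi(L)$ of $\PP^{n-1}$, and because $\xi\notin V$ one has $L\cap V\neq\emptyset$ if and only if $\bar L\cap W\neq\emptyset$. A dimension count on the incidence variety of pairs $\{(w,\bar L): w\in W,\ w\in\bar L\}$ — whose projection to $W$ has fibers of dimension equal to $\dim\{(n-d-2)\text{-planes through a point}\}$, so that its total dimension is strictly less than $\dim G(n-d-2,n-1)$ — shows that the projection to the Grassmannian $G(n-d-2,n-1)$ is not dominant, hence a generic $(n-d-2)$-plane of $\PP^{n-1}$ is disjoint from $W$. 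Pulling such a plane back through $\pi_\xi$ yields the desired $(n-d-1)$-plane through $\xi$ disjoint from $V$.

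Combining the two implications gives $V=\{\xi\in\PP^n:\text{every }(n-d-1)\text{-plane through }\xi\text{ lies in }Z(V)\}$; since $Z(V)$ is determined by $R_V$ up to a nonzero scalar, so is $V$, and conversely two irreducible $d$-dimensional varieties with proportional Chow forms have the same $Z(\cdot)$, hence coincide as point sets and therefore as varieties. The main obstacle is the genericity step: making rigorous that for $\xi\notin V$ a generic plane through $\xi$ avoids $V$, i.e.\ the dimension estimate on the incidence variety (equivalently, an induction on codimension that reduces to the elementary case of a generic hyperplane missing a point). Once $Z(V)$ is known to be a proper subvariety of the Grassmannian, as cited, this is standard, but it must be carried out over an algebraically closed ground field, and one has to verify carefully that the projected variety $W$ really retains dimension $d$ — which is exactly where the hypothesis $\xi\notin V$ enters.
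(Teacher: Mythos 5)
The paper does not actually prove this proposition: it is quoted verbatim from \cite[Prop.2.5,p.102]{GKZ}, so there is no in-paper argument to compare yours against. Your proof is correct and is essentially the standard argument underlying the GKZ statement. The forward implication is immediate once one knows (Definition~\ref{Dchow}, \cite[Prop.2.2]{GKZ}) that $Z(V)$ is exactly the locus of $(n-d-1)$-planes meeting $V$. For the converse, your projection $\pi_\xi$ from $\xi\notin V$ and the incidence-variety count do the job: the fibers of $\pi_\xi|_V$ are finite because a line through $\xi$ contained in $V$ would force $\xi\in V$, and in fact you only need $\dim W\le d$ (automatic for the image of a $d$-dimensional variety), not equality, since the count
$\dim\{(w,\bar L): w\in W,\ w\in\bar L\}\le d+(d+1)(n-d-2)=\dim G(n-d-2,n-1)-1$
already shows the planes meeting $W$ are not dense, so a generic $(n-d-2)$-plane misses $W$ and its cone over $\xi$ is an $(n-d-1)$-plane through $\xi$ disjoint from $V$, hence outside $Z(V)$. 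The caveats you flag are the right ones: the genericity step requires an algebraically closed ground field (harmless, since Chow forms are treated over $\CC$), and the identification of ``belongs to the Chow form'' with ``meets $V$'' is exactly the cited Prop.~2.2. Your closing remark that proportional Chow forms give equal zero loci $Z(\cdot)$, hence equal varieties, correctly disposes of the uniqueness clause.
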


Consider a variety $V$ as in Prop.~\ref{Pgkz} parameterized as 
\[
x_i=f_i(t),\,\, i=0,\dots,n, \,\, t=(t_0:\dots :t_{d}),
\]
and $d+1$ hyperplanes $H_0=\dots=H_d=0$,
where $H_i$ is defined as in equation~\eqref{Eq:hyperplanes}. 
Substituting $x_i=f_i(t)$ in every equation $H_i=0$,
we obtain an overdetermined system of equations in the parameters $t$
which we can eliminate by using resultants.
This reduces the computation of any Chow form to the computation of
a multivariate resultant:

\begin{corollary}
Consider any $V\subset\PP^n$ of dimension~$d$, with 
parameterization $x_i=f_i(t)$, $i=0,\dots,n$.
Then, the Chow form  $R_V$ is the resultant of the hyperplane equations
$H_i$, where one eliminates $t$.
\end{corollary}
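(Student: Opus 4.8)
The plan is to unwind the definitions on both sides and verify that the resultant of the substituted hyperplane equations vanishes exactly on the same locus in the $u_{ij}$-space that the Chow form does. Since the Chow form $R_V$ is, by Definition~\ref{Dchow} together with the uniqueness statement from \cite[Prop.2.2,p.99]{GKZ}, the unique (up to scalar) irreducible polynomial in the $u_{ij}$ whose vanishing is equivalent to $V\cap\{H_0=\dots=H_d=0\}\neq\emptyset$, it suffices to show that the resultant obtained by eliminating $t$ from the system
\[
H_i(f_0(t),\dots,f_n(t))=0,\quad i=0,\dots,d,
\]
is a polynomial in the $u_{ij}$ that vanishes precisely when this system has a solution in $t$, and that this solvability is equivalent to the geometric intersection condition.

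First I would record the standard property of the (sparse/multivariate) resultant being invoked: for a system of $d+1$ equations in the $d+1$ homogeneous parameters $t=(t_0:\dots:t_d)$ whose coefficients are the $u_{ij}$, the resultant is a polynomial in those coefficients that vanishes if and only if the system has a common projective root in $t$ (over the algebraic closure), under the genericity/dimension hypotheses guaranteeing the resultant is not identically a unit. This is exactly the setting produced by substituting the parameterization $x_i=f_i(t)$ into the linear forms $H_i=u_{i0}x_0+\dots+u_{in}x_n$: each $H_i(f(t))$ is a form in $t$ whose coefficients are linear in the row $(u_{i0},\dots,u_{in})$. Second, I would argue the geometric equivalence: a common root $t^\ast$ of the substituted system corresponds to the point $\xi=f(t^\ast)\in V$ lying on all the hyperplanes $H_i$, i.e.\ $\xi\in V\cap\{H_0=\dots=H_d=0\}$; conversely, since the parameterization is assumed to be dominant onto $V$ (as in Prop.~\ref{Pgkz} and the preceding discussion), any intersection point of $V$ with the $H_i$ is $f(t^\ast)$ for some parameter value $t^\ast$ (possibly after passing to the closure and to generic $u_{ij}$, since we only need the equivalence of the two \emph{hypersurfaces} $Z(V)$ in $u$-space, not pointwise agreement on measure-zero loci). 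Hence the vanishing locus of the resultant equals $Z(V)$, and by the uniqueness of the defining polynomial of $Z(V)$ the resultant is a scalar multiple of $R_V$ — which is what the corollary claims (the statement is understood up to the usual nonzero constant and, if the resultant is reducible, up to extraneous factors coming from the base locus of $f$, exactly as the paper flags with ``may yield a multiple of $R_V$'').

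The main obstacle I expect is not the formal vanishing argument but the bookkeeping around degeneracies: making sure the resultant in question is well-defined and not identically zero (this needs the substituted system to be ``generic enough'' in the sense required by the resultant being used, which is why a parametric representation is assumed), and handling the fact that the parameterization $f$ may fail to be surjective or may have base points, so that the naive correspondence between parameter roots and intersection points is only birational. The clean way to dispatch this is to work with the resultant's known characterization as (a power of) the defining equation of the image of the incidence variety under the projection that forgets $t$, invoke irreducibility of $V$ (hence of $Z(V)$) to pin down $R_V$ up to scalars, and then simply note that both $R_V$ and the resultant cut out the same irreducible hypersurface $Z(V)$ in $G(n-d-1,n)$ — so they agree up to the stated constant, with any additional factors of the resultant accounted for by the non-minimality already announced in the introduction.
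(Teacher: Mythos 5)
Your proposal is correct and follows essentially the same route as the paper, which offers the corollary with no separate proof beyond the preceding observation: substituting $x_i=f_i(t)$ into the $H_i$ gives $d+1$ forms in the $d+1$ homogeneous parameters $t$, and the resultant's vanishing characterizes exactly the intersection condition defining $R_V$, with uniqueness of the defining polynomial of $Z(V)$ pinning it down up to a constant. Your extra care about degeneracies (non-surjectivity, base points, possible extraneous factors or powers under specialization) only makes explicit caveats the paper itself acknowledges informally (``may yield a multiple of $R_V$''), so no gap.
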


Recall that $R_V$ is a polynomial in the Pl\"ucker coordinates, i.e., the maximal
minors of the $(d+1)\times(n+1)$ matrix whose rows are the normals to the hyperplanes $H_i$.
To obtain a representation for  $V$ as intersection of implicit hypersurfaces
from its Chow form one can apply \cite[Cor.2.6,p.102]{GKZ},
or \cite[Prop.3.1]{DalStu95}. Both methods reduce to applying a rewriting algorithm
such as the one in \cite{Stu08} and typically yield more implicit polynomials than necessary,
all of degree equal to the degree of $V$.

To illustrate the approach in \cite{DalStu95},
let us focus on varieties of codimension~2 in $\PP^3$, i.e., space curves.
Consider the planes $H_0, H_1$ in equation \eqref{Eq:Planesspacecurve}.
The Chow form $R_V$ is a polynomial in the brackets $[i,j]$, where $[i,j]$ denotes
the maximal minor indexed by the columns $0\le i,j \le 3$,
of the matrix
\[
U:=\begin{bmatrix}  u_0 & u_1 & u_2 & u_3 \\[1pt] v_0 & v_1 & v_2 & v_3 \end{bmatrix}.
\]
$R_V$ is then rewritten as a polynomial in the dual brackets $[[i,j]]$.
The dual brackets are substituted by the determinant $u_iv_j-u_jv_i$ of the corresponding minor of $U$.
Finally, the result is expanded as a polynomial whose variables are polynomials in the $v_0, v_1, v_2, v_3$ and its
coefficients are polynomials in the $u_0, u_1, u_2, u_3$.
The latter polynomials are all of degree equal to the degree of $V$
and form a system of implicit equations of $V$.


\subsection{Implicit representations for space curves}\label{SSspacecurves}

We now derive implicit representations by
avoiding complex computations, such as the rewriting algorithm.
Suppose that we have a space curve parameterized as
\[ 
x_i=f_i(t),\quad i=0,\ldots,3, \quad t=(t_0:t_1). 
\]
Let line $L$ be defined by a symbolic point
$\xi=(\xi_0:\dots :\xi_3)$ and a sufficiently generic point $G\not\in V$.
Define two planes $H_0, H_1$ that intersect along $L$, by choosing one more random
point $P_0$ and $ P_1$, respectively.
Let their equations be as in \eqref{Eq:Planesspacecurve},
where the coefficients now are \emph{linear polynomials} in $\xi$.
The Sylvester resultant of this system, where we set $x_i=f_i(t)$,
eliminates $t$ and returns a polynomial in $\xi$ which vanishes on $V$ (but not only on $V$),
thus offering a necessary but not sufficient condition,
see Algorithm~\ref{Achow}.

\begin{lemma}\label{Lsurfacedegree}
Let $\delta=\max\{\deg f_i(t), \, i=0,\ldots,3\}$ and $R_V^G$ be the Sylvester resultant of 
$$
H_0(f_0(t):\ldots:f_3(t)) \mbox{ and }\, H_1(f_0(t):\ldots:f_3(t)).
$$
Then $R_V^G$ factors into a degree $\delta$ polynomial
defining a surface $S_V^{G_i}$ containing the space curve $V$, and a polynomial $E_L^\delta$,
where $E_L$ is a linear polynomial defining the plane passing through points $G, P_0, P_1$.
\end{lemma}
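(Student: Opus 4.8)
The plan is to compute the Sylvester resultant $R_V^G$ explicitly enough to see its bidegree in $\xi$ and to identify its factors geometrically. First I would set up notation: after substituting $x_i = f_i(t)$, the two plane equations become
\[
H_0(f(t)) = \sum_{j=0}^3 u_j(\xi)\, f_j(t), \qquad H_1(f(t)) = \sum_{j=0}^3 v_j(\xi)\, f_j(t),
\]
where each $u_j(\xi), v_j(\xi)$ is a \emph{linear} form in $\xi$ (this is exactly how $H_0, H_1$ were constructed in the paragraph preceding the lemma: their coefficients are linear in $\xi$ because the plane is spanned by the symbolic point $\xi$ together with fixed points $G, P_0$ or $G, P_1$). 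Both $H_0(f(t))$ and $H_1(f(t))$ are polynomials in $t = (t_0 : t_1)$ of degree $\delta = \max_i \deg f_i$, with coefficients that are linear in $\xi$. The Sylvester resultant with respect to $t$ (the resultant of two binary forms of degree $\delta$) is therefore a polynomial in $\xi$ that is homogeneous of degree $2\delta$: it is a $2\delta \times 2\delta$ determinant, $\delta$ of whose rows carry the coefficients of $H_0(f(t))$ (each linear in $\xi$) and $\delta$ carry those of $H_1(f(t))$ (also linear in $\xi$), so the total degree in $\xi$ is $\delta + \delta = 2\delta$.

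Next I would argue that $R_V^G$ is divisible by $E_L^\delta$, where $E_L$ is the linear form vanishing on the plane through $G, P_0, P_1$. The key observation is a common-root phenomenon independent of $t$: whenever $\xi$ lies on the plane $\langle G, P_0, P_1\rangle$, the point $\xi$ lies on \emph{both} $H_0$ and $H_1$ as linear conditions on the coefficient vectors — more precisely, the two linear systems $u(\xi) \cdot (\cdot) = 0$ and $v(\xi)\cdot(\cdot)=0$ become linearly dependent as forms in $t$, forcing the resultant to vanish. The cleanest route is: the line $L = \langle \xi, G\rangle$ degenerates (it passes through the fixed point $G$ and when $\xi \in \langle G,P_0,P_1\rangle$ the two spanning planes $H_0 = \langle L, P_0\rangle$, $H_1 = \langle L, P_1\rangle$ share the whole plane $\langle G,P_0,P_1\rangle$, hence are not transverse), so $H_0$ and $H_1$ have a common linear factor as polynomials on $\PP^3$, and a fortiori $H_0(f(t))$ and $H_1(f(t))$ share a common root in $t$, making the Sylvester resultant vanish. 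To get the multiplicity $\delta$ rather than just $1$, I would localize: writing $E_L = 0$ as one coordinate on $\xi$-space, show that modulo $E_L$ the two binary forms $H_0(f(t)), H_1(f(t))$ become proportional (share a common factor of degree ... ), which makes the resultant vanish to order $\delta$ — this is where a direct specialization of the Sylvester matrix, or the standard fact that $\mathrm{Res}(gh, gk) = \mathrm{Res}(g,\cdot)$-type factorization, gives the exponent. This multiplicity count is the step I expect to be the main obstacle, since it requires more than a single-point vanishing argument and must be pinned down either by a careful analysis of the rank drop of the Sylvester matrix along $\{E_L = 0\}$ or by invoking the multiplicativity of the resultant under a factorization of one of the inputs.

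Finally, having $E_L^\delta \mid R_V^G$ with $\deg_\xi R_V^G = 2\delta$, the complementary factor $S_V^{G}$ has degree $2\delta - \delta = \delta$. It remains to check that $S_V^G$ vanishes on $V$: for $\xi \in V$, the line $L = \langle \xi, G\rangle$ meets $V$ (at $\xi$ itself), so by the defining property of the Chow form (Prop.~\ref{Pgkz} / Def.~\ref{Dchow}) applied to the pair of planes $H_0, H_1$ through $L$, the resultant $R_V^G$ — which is the Chow form specialized along this one-parameter family of lines through $G$ — vanishes; since $\xi$ generically does not lie on the plane $\{E_L=0\}$, it must be the factor $S_V^G$ that vanishes. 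Conversely $S_V^G$ may vanish off $V$ as well (on the locus of $\xi$ for which $\langle \xi, G\rangle$ happens to meet $V$ elsewhere, or on the contribution of $G$), which is exactly why the lemma and the surrounding text only claim $S_V^G \supseteq V$, a necessary-but-not-sufficient condition. I would close by noting that running this with three generic choices of $G$ yields three surfaces of degree $\delta$ whose intersection cuts out $V$ set-theoretically, matching the claim in the introduction.
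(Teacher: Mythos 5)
Your argument is correct in outline and reaches the same three ingredients as the paper (degree $2\delta$ in $\xi$, divisibility by $E_L^\delta$, and vanishing of the residual factor on $V$), but you organize them differently. The paper does not try to prove $E_L^\delta\mid R_V^G$ directly: it observes that $R_V^G$ vanishes exactly when either $\xi$ lies on the plane $\langle G,P_0,P_1\rangle$ or $\xi$ lies on a line through $G$ meeting $V$, so after dividing out all copies of $E_L$ what remains is an equation of the cone with vertex $G$ and directrix $V$; since that cone has degree $\delta$ and the total degree is $2\delta$, the exponent of $E_L$ must be $\delta$. You instead attack the multiplicity head-on, via the specialization of the Sylvester matrix along $\{E_L=0\}$, and this is exactly the step you leave hanging (``the main obstacle''). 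It can be closed along the lines you indicate: for $\xi$ on the plane $\langle G,P_0,P_1\rangle$, both $H_0=\langle\xi,G,P_0\rangle$ and $H_1=\langle\xi,G,P_1\rangle$ coincide with that fixed plane, so modulo $E_L$ one has $H_i(f(t))=\lambda_i(\xi)\,\bigl(n\cdot f(t)\bigr)$ for scalars $\lambda_i(\xi)$ and the fixed normal $n$; hence all $2\delta$ rows of the Sylvester matrix lie in the $\delta$-dimensional span of the shifts of $n\cdot f(t)$, the matrix has corank $\delta$ along $\{E_L=0\}$, and a determinant whose matrix has corank $k$ along a hypersurface vanishes there to order at least $k$, giving $E_L^{\delta}\mid R_V^G$ and a residual factor of degree exactly $\delta$. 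Your route buys an honest multiplicity bound without invoking the degree of the cone (which in the paper silently uses properness of the parameterization so that $\deg V=\delta$), while the paper's route buys the stronger geometric identification of $S_V^G$ as that cone, which is what Theorem~\ref{Tthreesurf} later exploits; your closing step (a common root $t^*$ with $f(t^*)=\xi$ forces $R_V^G(\xi)=0$, and generic $\xi\in V$ avoids $\{E_L=0\}$, so $S_V^G$ vanishes on $V$) matches the paper's intent and is fine.
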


\begin{proof}
The degree of the Sylvester resultant in the coefficients of each of the
$H_0, H_1$, is $\delta$, and its total degree is $2\delta$;
$\xi$ is involved (linearly) in the coefficients
of both $H_0$ and $H_1$, since it is taken to lie in the intersection of the two planes.
Hence the degree of the sought polynomial in $\xi$ is $2\delta$.

It vanishes only in two cases:
if $\xi$ belongs to the plane defined by $G$, $P_0$ and $P_1$,
or if $\xi$ belongs to a line passing by $G$ and intersecting $V$.
Hence we can divide (possibly several times) the sought 
polynomial in $\xi$ by the equation $E_L$ of 
the plane defined by $G$, $P_0$ and $P_1$,
thus obtaining an equation of the conical surface of vertex $G$ and directrix $V$. 
Since such a conical surface is of degree $\delta$,
its equation is $Res(H_0, H_1) / E_L^\delta$.
\end{proof}

\begin{theorem}\label{Tthreesurf}
Let $p:\mathbb{P}^1(\mathbb{R}) \rightarrow \mathbb{P}^3(\mathbb{R})$
be a rational parameterization 
of a space curve $V$ and $S_V^{G_i},\, i=1, 2, 3$ 
be three conical surfaces
obtained by the method above with 3 different random points $G_i \notin V$. 
We distinguish two cases.
\begin{enumerate}
\item 
If $V$ is not planar and the points $G_i$ are not collinear, 
then $V = S_V^{G_1} \cap S_V^{G_2} \cap S_V^{G_3} $.
\item 
If $V$ is contained in a plane $\mathcal{P}$, at least two of  
the points $G_i$ are not in $\mathcal{P}$ and if the $G_i$
are not collinear, then $V = \cap_{i=1,2,3} S_V^{G_i}$.
\end{enumerate}
\end{theorem}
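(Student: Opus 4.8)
The plan is to analyze, for each conical surface $S_V^{G_i}$, exactly which points it contains beyond $V$ itself, and then show that the triple intersection collapses to $V$. From Lemma~\ref{Lsurfacedegree} and Proposition~\ref{Pgkz}, a point $\xi$ lies on $S_V^{G_i}$ precisely when the line through $\xi$ and $G_i$ meets $V$; equivalently, $S_V^{G_i}$ is the cone with vertex $G_i$ and directrix $V$. So $\xi \in \bigcap_i S_V^{G_i}$ means that for each $i$ the line $\overline{\xi G_i}$ intersects $V$. I would first dispose of the trivial inclusion $V \subseteq \bigcap_i S_V^{G_i}$ (any point of $V$ together with any $G_i$ spans a line meeting $V$), and then concentrate on the reverse inclusion.

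For the reverse inclusion in case~1, suppose $\xi \in \bigcap_{i=1,2,3} S_V^{G_i}$ but $\xi \notin V$; seek a contradiction. For each $i$ pick a point $P_i \in V$ on the line $\overline{\xi G_i}$. If two of the lines $\overline{\xi G_i}$ coincide, then that common line contains two of the distinct non-collinear-with-$\xi$ points $G_i$, forcing $\xi$ onto the line through those two $G_i$'s; I would handle this degenerate sub-case separately using that the $G_i$ are not collinear and generic, so that at most finitely many bad $\xi$ arise and genericity of the $G_i$ rules them out — or, more cleanly, observe that if $\xi \notin V$ and the three lines are not all distinct we still get $\xi$ lying on a line joining two of the $G_i$, and one can choose the $G_i$ to avoid the (finitely many or one-dimensional family of) lines that could cause trouble. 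In the main sub-case the three lines $\overline{\xi G_i}$ are distinct and all pass through $\xi$, hence span at most a plane through $\xi$ only if the $G_i$ are coplanar with $\xi$. The key dichotomy: either the points $P_1, P_2, P_3 \in V$ are not all equal, in which case $V$ meets the plane spanned by (say) $P_1, P_2, \xi$ in at least two points while also the third line forces more incidences, pushing $V$ into a plane — contradicting that $V$ is not planar, once we also use that $\xi, G_1, G_2, G_3$ are not all coplanar (which follows from the $G_i$ being non-collinear and generic, so no unexpected plane contains all of them and $\xi$); or $P_1 = P_2 = P_3 =: P$, meaning all three lines $\overline{\xi G_i}$ pass through the single point $P \in V$, so $\xi, G_1, G_2, G_3, P$ are all collinear, contradicting non-collinearity of the $G_i$.

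Case~2 is a variant: $V$ lies in a plane $\mathcal{P}$, at least two $G_i$ (say $G_1, G_2$) are outside $\mathcal{P}$, and the $G_i$ are not collinear. Here I would argue that if $\xi \in \bigcap_i S_V^{G_i} \setminus V$, then the lines $\overline{\xi G_1}$ and $\overline{\xi G_2}$ each meet $V \subset \mathcal{P}$ in a point, say $P_1, P_2$. If $\xi \in \mathcal{P}$: then $\overline{\xi G_1}$ meets $\mathcal{P}$ in $\xi$ and in $P_1$; since $G_1 \notin \mathcal{P}$, the line $\overline{\xi G_1}$ is not contained in $\mathcal{P}$, so it meets $\mathcal{P}$ in a unique point, forcing $\xi = P_1 \in V$, a contradiction. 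If $\xi \notin \mathcal{P}$: then $\overline{\xi G_1}$ and $\overline{\xi G_2}$ are two distinct lines (distinct because $G_1 \ne G_2$ and if they coincided $\xi$ would lie on $\overline{G_1 G_2}$, and I would note this line meets $\mathcal{P}$ in one point since at least one of $G_1,G_2$ is off $\mathcal{P}$, so $\xi \notin \mathcal{P}$ gives $\xi$ off that line too after a genericity choice); these two lines span a plane $\Pi$ through $\xi$, and $\Pi \cap \mathcal{P}$ is a line containing $P_1$ and $P_2$. One then uses the third cone: $\overline{\xi G_3}$ meets $V$ at some $P_3 \in \mathcal{P}$; combining, all of $P_1, P_2, P_3$ lie on $\Pi \cap \mathcal{P}$, a single line $\ell \subset \mathcal{P}$, so the cone over $V$ from $\xi$ would force $V$ into $\ell$ unless the $P_i$ coincide — if $V$ is genuinely a curve not contained in a line, this is impossible, and if they coincide at a single $P$ then $\xi, G_1, G_2, G_3$ are collinear through $P$, contradicting non-collinearity of the $G_i$. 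In both cases the exceptional locus is avoided by the random (generic) choice of the $G_i$.

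The main obstacle, and where care is needed, is the bookkeeping of degenerate configurations: lines $\overline{\xi G_i}$ that coincide or become tangent to $V$, the case where the chosen auxiliary points $P_i \in V$ collapse, and ensuring that "three random points $G_i$" genuinely avoids the (lower-dimensional) bad loci. I would make this rigorous by noting that each bad event (e.g. "$\xi$ lies on the line through $G_i$ and $G_j$", "$G_1,G_2,G_3,\xi$ coplanar with $\xi \notin V$", "some $\overline{\xi G_i}$ is a multiple secant forcing $V$ planar") cuts out a proper subvariety of the parameter space of triples $(G_1,G_2,G_3)$, so a generic (hence, in the randomized algorithm, almost surely) choice lies in the complement; combined with Proposition~\ref{Pgkz} and Lemma~\ref{Lsurfacedegree} this yields $V = \bigcap_{i=1}^3 S_V^{G_i}$ set-theoretically in both cases.
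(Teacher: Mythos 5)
The core steps of your argument do not hold up. In Case~1 your dichotomy rests on the claim that if the three points $P_1,P_2,P_3\in V$ cut out on $V$ by the lines $\overline{\xi G_i}$ are not all equal, then $V$ is ``pushed into a plane''; nothing of the sort follows --- knowing that $V$ meets some plane (or some line) in two or three points says nothing about $V$ being planar, so the contradiction you need never materializes. The same non sequitur appears in Case~2: the point $P_3$ has no reason to lie on the plane $\Pi$ spanned by $\overline{\xi G_1}$ and $\overline{\xi G_2}$ (that would require $G_3\in\Pi$), and three points of $V$ on a line do not force $V$ into that line. Only the trivial inclusion $V\subseteq\bigcap_i S_V^{G_i}$ and the sub-case $\xi\in\mathcal{P}$ of Case~2 are sound.

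The genericity fallback cannot repair this, because the bad event ``there exists $\xi\notin V$ whose three lines $\overline{\xi G_i}$ all meet $V$'' is not a proper subvariety of the space of triples $(G_1,G_2,G_3)$: the configurations $(\xi,A_1,A_2,A_3,G_1,G_2,G_3)$ with $A_i\in V$ and $G_i$ on the line $\overline{\xi A_i}$ form a $9$-parameter family ($3$ for $\xi$, one for each $A_i$, one for each $G_i$ on its line), the same dimension as the space of triples, so its projection cannot be dismissed as lower-dimensional. Concretely, for the twisted cubic $V=\{(t,t^2,t^3)\}$ take $\xi=(0,0,1)\notin V$ and $G_1=(2,2,1)$, $G_2=(-2,2,-3)$, $G_3=(4,8,15)$: these $G_i$ are non-collinear and off $V$, yet the lines $\overline{\xi G_i}$ meet $V$ at $(1,1,1)$, $(-1,1,-1)$, $(2,4,8)$ respectively, so $\xi\in S_V^{G_1}\cap S_V^{G_2}\cap S_V^{G_3}\setminus V$. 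Over $\mathbb{C}$ a B\'ezout count tells the same story: $S_V^{G_1}\cap S_V^{G_2}$ contains, besides $V$, a residual curve of degree $\delta^2-\delta$, which necessarily meets $S_V^{G_3}$ in points that need not lie on $V$. So a purely incidence-theoretic proof that literally no point outside $V$ survives in the triple intersection is not available; the substantive content one can (and the paper does) establish is that no extra \emph{curve} component occurs. The paper's proof is of a different nature from yours: it applies a projectivity sending $G_1,G_2,G_3$ to the three coordinate points at infinity, so the cones become coordinate cylinders, and then compares the local parameterizations that a putative extra curve component inherits from each of the three cylinders, using non-planarity of $V$ (respectively the hypotheses of Case~2) to ensure these local maps are non-constant and conclude the component coincides with $V$. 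Your proposal contains no analogue of this local comparison, and without it the theorem's reverse inclusion is not proved.
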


\begin{proof}
\emph{Case (1)}. 
Since $S_V^{G_i}$ are three different cones (or cylinders when $G_i$ are at infinity), 
they have no 2-dimensional component in common. 
We first reduce the problem to the case 
\begin{align*}
 G_1&=P_x:=(1:0:0:0), \quad G_2=P_y:=(0:1:0:0), \quad G_3=P_z:=(0:0:1:0).
\end{align*}
We shall prove that an algebraic space curve is the intersection of the 3 cylinders
spanned by the curve itself and of directions $\vec x_1, \vec x_2$ and $\vec x_3$ respectively.

Let $\mathcal{C}$ be an irreducible component of $(S_V^{G_1} \cap S_V^{G_2} \cap S_V^{G_3})$.
Since $G_1, G_2, G_3$ are not collinear, there exists a map $\phi \in \PGL(4,\mathbb{R})$ 
that sends $G_1$ to $P_{x_1}$, $G_2$ to $P_{x_2}$ and $G_3$ to $P_{x_3}$. By linearity of $\phi$ 
(in particular, $\phi$ preserves alignment), we have $\phi(S_V^{G_1}) = S_{\phi(V)}^{P_{x_1}}$ 
and similar equalities for $\phi(S_V^{G_2})$ and $\phi(S_V^{G_3})$. Also, $\phi(V)$ 
is a rational variety parameterized by $\phi \circ p$.

Thus, if $\mathcal{C} \subset S_V^{G_3}$, then $\phi(\mathcal{C}) \subset S_{\phi(V)}^{P_z}$. 
By this argument, we only have to prove that there is no rational space curve $\phi(V)$ 
for which $S_{\phi(V)}^{P_{x_1}} \cap S_{\phi(V)}^{P_{x_2}} \cap S_{\phi(V)}^{P_{x_3}}$ contains 
another curve different than $\phi(V)$.
For convenience, $\phi(V)$ is denoted by $V$ and $\phi(\mathcal{C})$
is denoted by $\mathcal{C}$ in what follows. 

\begin{remark}\label{Rhypo1}
Since $V$ does not lie in the plane
at infinity, we can switch to the affine setting. 
\end{remark}

We now have ``nicer" expressions for our surfaces:
\begin{itemize}
\item $S_V^{P_{x_1}} = \left\lbrace (x_1,p_2(t_1),p_3(t_1)) \mid x_1, t_1 \in \mathbb{R} \right\rbrace$,
\item $S_V^{P_{x_2}} = \left\lbrace (p_1(t_1),x_2,p_3(t_1)) \mid x_2, t_1 \in \mathbb{R} \right\rbrace$,
\item $S_V^{P_{x_3}} = \left\lbrace (p_1(t_1),p_2(t_1),x_3) \mid x_3, t_1 \in \mathbb{R} \right\rbrace$.
\end{itemize}
Since $\mathcal{C}\subset S_V^{P_{x_1}}$, there is a (not necessarily rational) parameterization 
of $\mathcal{C}$ given by $q: t_1 \in \mathbb{R} \mapsto (q_1(t_1), p_2(\varphi(t_1)), p_3(\varphi(t_1)))$,
where $q_1$ and $\varphi$ are continuous piecewise smooth maps.

\begin{remark}\label{Rhypo2}
We see that $\varphi$ (resp.\ $q_1$) is not locally constant:
otherwise, a part of $\mathcal{C}$ would be
included in a straight line (resp.\ a plane), which contradicts the fact
that $V$ is not planar. 
\end{remark}

We can thus pick a small interval $I := ( a, b ) \subset \mathbb{R}$
such that $q_{|I}$ is injective and so, without loss of generality, we assume that $\varphi_{|I}(t_1) = t_1$.

Also, since $V$ is not planar and $p$ is rational, the sets of non regular points of 
the three maps $\pi_{x_1} : t_1 \mapsto (p_2(t_1), p_3(t_1))$, $\pi_{x_2} : t_1 \mapsto (p_1(t_1), p_3(t_1))$ 
and $\pi_{x_3} : t_1 \mapsto (p_1(t_1), p_2(t_1))$ are finite. Shrinking $I$ if necessary,
we assume there is no such non regular point in $I$.

Now, we have a local curve $q(I) = \left\lbrace (q_1(t_1), p_2(t_1), p_3(t_1)) \mid t_1 \in I \right\rbrace$ 
included in ${(S_V^{P_{x_1}} \cap S_V^{P_{x_2}} \cap S_V^{P_{x_3}})}$. 
Using the fact it lies on $S_V^{P_{x_2}}$, 
we have another parameterization of $q(I)$ given by $r(t_1)=$ $(r_1(t_1)$, $p_2(t_1)$, $r_3(t_1))$ 
with $r_i$ injective on $I$ and $r_i(I) \subset p_i(\mathbb{R})$. Similarly, $q(I) \subset S_V^{P_{x_3}}$ 
gives a third parameterization $s(t) =$ $(s_1(t_1)$, $s_2(t_1)$, $p_3(t_1))$ with $s_i$ injective on $I$ 
and $s_i(I) \subset p_i(\mathbb{R})$.

Comparing $s$ with $q$ and $r$, we have $s(t_1) = (s_1(t_1), p_2(t_1), p_3(t_1))$.
Lastly, since $\pi_{x_1}$ is regular on $I$, there is only one branch in $\pi_{x_1}(s(I))$ 
and so $s_1^{-1}(s(I)) = p_1^{-1}(s(I))$. The curve $\mathcal{C}$ is thus locally contained in $V$; 
it follows that $\mathcal{C} = V$.

\emph{Case (2)}. 
Let us look more closely at Remarks~\ref{Rhypo1} 
and~\ref{Rhypo2} where we used the non-planarity hypothesis:
\begin{enumerate}
\item At Remark~\ref{Rhypo1} 
we used the fact that $V$ is not contained in the plane containing $G_1, G_2$ and $G_3$.
\item At Remark~\ref{Rhypo2} we used the fact that the points $G_i$ do not belong to a plane containing $V$, $i=1,\dots, 3$.
\end{enumerate}
These facts are still valid under the assumption of $G_i \not\in \mathcal{P}$, hence the proof of \emph{Case~(1)} holds.
\end{proof}


\subsection{The general case} 
\label{SSgeneraldim}

Let us generalize the construction above.
Let $V\subset \PP^n$ be a $d$-dimensional variety parameterized as 
$$
x_i=f_i(t),\, i=0,\ldots,n,\, t=(t_0:\dots :t_{d}),
$$ 
and $G$ be a set of $n-d-1$ sufficiently generic points $G_i$ not in $V$.
Let $L$ be the $(n-d-1)$-dimensional
linear subspace  defined by a symbolic point
$\xi=(\xi_0:\dots :\xi_n)$ and the points in $G$.
Chose $d+1$ sets of $d$ random points $P_i=\{P_{i1},\ldots,P_{id}\}$, $P_{ij}\notin V$.
Let $H_i,\, i=0,\ldots,d$ be the hyperplane defined as the span of the points $\xi, G, P_i$.
Substitute  $x_i=f_i(t)$ in each $H_i$ to obtain
\begin{align}\label{Eq:hyperplanessubs}
 H_0(f_0(t):\ldots :f_{n}(t))=0 , \;\,
\dots, \;\, 
 H_d(f_0(t):\ldots :f_{n}(t))=0 .
\end{align}

The resultant of the polynomials in~\eqref{Eq:hyperplanessubs}
eliminates $t$ and returns
a polynomial $R^G_V$ in $\xi$ which vanishes on $V$ (but not only on $V$),
thus offering a necessary but not sufficient condition, see
Algorithm~\ref{Achow}.
To achieve the hypothesis of Prop.~\ref{Pgkz},
one may iterate for a few distinct pointsets $G$ thus obtaining 
implicit hypersurfaces $R_V^G=0$ whose intersection is $V$.

To compute the resultant one option is to use
interpolation in conjunction with information on its support.
This might be obtained from degree bounds on $R_V^G$, as explained below,
or by the computation of the monomials
of the polynomials in~\eqref{Eq:hyperplanessubs}
using software \texttt{ResPol} from~\cite{EmFiKoPeJ}.
The latter is analogous to the basic approach for defining the
interpolation matrix by the support obtained from the resultant polytope, see Sec.~\ref{Smatrixmethod}.  

In the general setting we do not have a result similar to Lem.~\ref{Lsurfacedegree}. 
Our method gives rise to extraneous factors, whose
expressions and powers are less clear than in the case of space curves. 
From experiments, it appears that there are only extraneous factors of degree $\codim(V)$.
Of course, there are other extraneous factors depending on the chosen method for computing the resultant,
e.g., interpolation, or as the determinant of the Macaulay matrix.

To find the degree of $R^G_V$ in $\xi$, first note that the degree in $t$ of
every $H_i(f_0(t):\ldots:f_{n}(t))$
is $\delta=\max\{\deg_t f_j(t), \, j=0,\ldots,n\}$,
and that the coefficients of the $H_i$'s are linear polynomials in $\xi$.
The resultant of these polynomials has degree in the coefficients of each $H_i$ equal to
$\delta^d$, therefore total degree $(d+1)\delta^d$, see e.g.~\cite[Thm.3.1]{CLO2}.

A tighter bound on the degree of $R_V^G$ can be obtained by considering sparse resultants
and {\em mixed volumes} \cite[Ch.7]{CLO2}. Denote
\[
MV_{-i}=MV(H_0,\ldots,H_{i-1},H_{i+1},\ldots,H_d) , \; 0\le i\le d,
\]
the mixed volume of all polynomials excluding $H_i$.
The degree of the sparse resultant in the coefficients of $H_i$ is known
to equal $MV_{-i}$, therefore its total degree equals $\sum_{i=0}^d MV_{-i}$.
Both degree bounds above are typically much larger than the degree of $V$.

An issue arises of course at sampling: all generated points $\xi$ lie on
$V$, whereas we are trying to compute a hypersurface containing $V$.
We expect that the kernel of $M$ will have large dimension and among
the kernel vectors we may choose one or more ``small" vectors to define
the implicit equation. Here  ``small'' may refer to the number of non-zero vector entries,
or to the total degree of the monomials corresponding to its non-zero entries.

We can avoid interpolation if there exists a rational or, even better,
determinantal formula, for the resultant,
such as Sylvester's for curves in $\PP^n$,
or D'Andrea's formula for sparse resultants~\cite{DAnd02},
which are optimal for generic coefficients.
For arbitrary coefficients, an infinitesimal perturbation may be applied.

\begin{algorithm}[ht]
\caption{Implicit equation}\label{Achow} 
\DontPrintSemicolon \BlankLine
\Input{$V\subset\PP^d$, parameterized by $x_i=f_i(t), i=0,\dots,n$.}
\Output{A polynomial vanishing on $V$.}
(1)\, Define $(n-d-1)$-dimensional linear subspace $L$ by 
affinely independent random points
$G= \{G_1,\dots,G_{n-d-1}\}$ : $G_i\not\in V$, and
consider symbolic point $\xi=(\xi_0,\dots,\xi_n)$. 
\\ (2)\, Define $d+1$ hyperplanes $H_i$ through $G,\xi,P_i$, for  
random points $P_i$ affinely independent from points in $G$.
\\ (3)\, Set $x_i=f_i(t)$ in the $H_i$: their
resultant, where we eliminate $t$ is the sought polynomial in $\xi$.
Compute it by formula or by interpolation
as in Sec.~\ref{Smatrixmethod}.
\end{algorithm}


\subsection{Examples and complexity}\label{SSefficiency}

In this section we illustrate our method through examples of curves in 3D and 4D space and 
a surface in 4D space.

We switch from the projective to the affine setting and
set $\xi=(\xi_1,\ldots,\xi_n)\equiv(x_1,\ldots,x_n)$, for emphasizing these
are the implicit variables. 
For curves in any ambient dimension
we compute the resultant directly using the Sylvester matrix,
and also by interpolation, employing degree bounds relying on mixed volume.
For $d>1$ we use the Macaulay (or the toric resultant) matrix.
Interpolating the resultant leads to matrices with very large kernels:
on the upside, the polynomials we obtain from the kernel vectors
are of degree no greater than those of the former method.
Moreover, among them we can find a number of polynomials of small degree
and, often, smaller than the degree predicted by degree bounds:
these polynomials define the variety set-theoretically, 
in other words, as the intersection of a few surfaces.

\begin{example}  \label{Exam:twistedcubic}
Consider the twisted cubic curve {\em affinely} parameterized as:
\[
V = \{(t , t^2 , t^3 ) \in \RR^3 \, :\, t \in \RR \}.
\]
The implicit equations of $V$ are 
\begin{equation}\label{Eq:twistcubic}
x_1^2 - x_2 = x_2^2 - x_1 x_3 = x_1 x_2 - x_3 = 0 .
\end{equation}

Let $L$ be the line passing through symbolic point $\xi=(x_1,x_2,x_3)$,
and generic point $G\not\in V$ (here the set $G$ consists of only one point also denoted $G$).
We define two random planes
$H_1(x_1 , x_2 , x_3 )$,  $H_2(x_1 , x_2 , x_3 )$,
intersecting at $L$ by considering additional random points $P_1, P_2\notin L$, respectively.
Then, the Sylvester resultant of $H_1(t , t^2 , t^3 )=H_2(t , t^2 , t^3 )=0$
is a polynomial of degree~6 in $\xi$ which factors into
the degree~3 polynomial 
\begin{eqnarray*}
32x_{2}-16x_{3}+56x_{1}x_{3}+16x_{1}x_{2}-80x_{2}^{2}-
\\
-32x_{1}^{2} - 40x_{3}^{2} + 42x_{2}^{3} - 2x_{{3}}x_{{2}}x_{{1}}+56x_{{3}}x_{{2}}-
\\
-5x_{{3}}{x_{{2}}}^{2}+
5{x_{{ 3}}}^{2}x_{{1}}-8x_{{3}}{x_{{1}}}^{2}-48{x_{{2}}}^{2}x_{{1}}+24x
_{{2}}{x_{{1}}}^{2}
\end{eqnarray*}
and the expected extraneous linear factor raised to the power 3.

This yields a surface containing $V$ but not of minimal degree.
Repeating the procedure 3 times, the ideal of the resulting polynomials equals the ideal defined
from the polynomials in \eqref{Eq:twistcubic}.

Alternatively, we may interpolate the Sylvester resultant above.
We take as predicted support the lattice points
in a 3-simplex of size 6. The $84\times84$ matrix constructed has a kernel of dimension 65.
The corresponding 65 kernel polynomials are of degrees from 2 to 6. 
Among them, the three polynomials of
minimal degree 2: $x_1 x_3 - x_2^2, ~x_1 x_2 - x_3, ~ -x_2 + x_1^2$,
define the twisted cubic exactly.

In contrast, \cite[Sec.~3.3]{DalStu95} gives 16 (homogeneous) implicit equations, all of degree 3.
\end{example}


\begin{example}  \label{Exam:2cylinders}
Consider the space curve in Fig.~\ref{F2cylinders} affinely parameterized as:
\[
V = \left\{
\left( \frac{1-t^2}{1+t^2} , \frac{2t}{1+t^2} , \left(\frac{1-t^2}{1+t^2}\right)^2 \right)
\in \RR^3 \, :\, t \in \RR \right\}.
\]
It is the intersection of two cylinders:
\begin{equation}\label{Eq:2cylinders}
x_1^2-x_3 = x_2^2+x_3-1 = 0 .
\end{equation}

Let line $L$ be defined from the symbolic point $\xi=(x_1,x_2,x_3)$,
and ``generic" point $G\not\in V$.
Define two random planes $H_1$ and $H_2$ that intersect at $L$,
by choosing random points $P_1, P_2\notin L$, respectively.
Then, the Sylvester resultant of
$$
H_1(\frac{1-t^2}{1+t^2} , \frac{2t}{1+t^2} , (\frac{1-t^2}{1+t^2})^2),\;\;
H_2(\frac{1-t^2}{1+t^2} , \frac{2t}{1+t^2} , (\frac{1-t^2}{1+t^2})^2)
$$
is a polynomial of degree~8 in $\xi$ which factors into the following degree~4 polynomial:
\begin{eqnarray*}
29+120x_1-56x_2-182x_3+168x_1x_2-110x_1^2-78x_2^2+\\
+90x_2^2x_3-12x_2x_3+70x_1^2x_3-18x_1x_3 -120x_1x_2^2+40x_1^2x_2+\\
+156x_3^2+56x_2^3+49x_2^4+25x_1^4-48x_3^2x_2-72x_3^2x_1+\\
+12x_3^2x_2^2+28x_3x_2^3+12x_3^2x_1^2-168x_2^3x_1+222x_2^2x_1^2-\\
-30x_3x_1^3-120x_2x_1^3-48x_3x_2x_1-102x_3x_2^2x_1+76x_3x_1^2x_2
\end{eqnarray*}
and the expected extraneous linear factor raised to the power~4.
This yields a surface containing $V$ but not of minimal degree.
As in the previous example,
repeating the procedure 3 times,
the ideal of the resulting polynomials equals the ideal defined
from the polynomials in \eqref{Eq:2cylinders}. 
Note that all iterations produce polynomials of degree 8
which all factor into a degree~4 polynomial vanishing on $V$ and a degree~1 polynomial 
raised to the power~4, as predicted by Lem.~\ref{Lsurfacedegree}.

Alternatively, we interpolate the Sylvester resultant above using as support the lattice points
in a 3-simplex of size 8. The $165\times165$ matrix constructed has a kernel of dimension 133.
The degrees of the corresponding 133 kernel polynomials vary from 2 to 8. 
Among them, there are two polynomials of
degree 2. They coincide with those in \eqref{Eq:2cylinders}.

The equations above were obtained by choosing random points with integral coordinates and relatively close to the curve.
When we increase the range or allow for non-integral points, in order to have better 
chances to avoid the non-generic points,
the size of the coefficients increase.
For example, using random points with integral coordinates in a box of size 100 around 
the curve yields equations as below, where we omit most terms:
{\small \begin{eqnarray*}
3983438998535755975578507593x^4 + \dots 
- 6421697880560981054975490304
\end{eqnarray*}}
\end{example}


\begin{example}\label{ExSendra-Viviani}
We illustrate Lem.~\ref{Lsurfacedegree} in two more examples.
For the space curve of parametric degree 4 in~\cite{RueSenSen}, 
we compute the 3 implicit equations of degree 4 in  0.171 sec.
For the Viviani curve which has parametric degree 4  
and implicit equations: $x_1^2+x_2^2+x_3^2=4\,a^2$,\; $(x_1-a)^2+x_2^2=a^2$,
we compute 3 implicit equations of total implicit degree~4, 
and degree 74 in the parameter $a$, in 7.72~sec.
\end{example}

\begin{example}\label{Ex4Dcurvemain}
Consider the curve in $\mathbb{R}^4$ with parameterization:
\[
(x_1,x_2,x_3,x_4) = (t^2-t-1,\, t^3+2\,t^2-t,\, t^2+t-1,\, t^3-2\,t+3)
\]
The curve is defined by 5 implicit equations of degrees 1,2,2,2 and 3.
Our algorithm computes 5 equations of degree 6 in 0.06 sec. 
These equations contain linear extraneous factors raised to the power 3. 
When these are divided out we obtain 5 degree 3 equations which define the curve set theoretically. 
\end{example}

\begin{example}
We tested our method on a surface in $\mathbb{R}^4$ with parametric degree~2.
It has an implicit representation defined by 7 equations all of degree 3.
The resultant of the hyperplane equations is of degree 12, while the polynomial 
computed as the determinant of the Macaulay matrix is of degree 15 and
factors into 4 polynomials:
an irreducible polynomial of degree 4 that contains the surface,
two polynomials due to the Macaulay resultant, and a power of a linear polynomial
due to our method. A total of 5 polynomials define the surface set-theoretically.
\end{example}

In all examples in this section, we used 
our Maple implementation of Algorithm~\ref{Achow}. 
We compared to Maple's native command 
\texttt{algcurves[implicitize]} on a laptop with a 2GHz Intel Celeron processor running Maple~18.
Our implementation is slower per output equation for curves of degree larger than~6, see Fig.~\ref{F3Efficiency}.
For curves of moderate and high degrees, the Sylvester resultant computation (or interpolation) 
represents more than 99\% of our algorithm's computation time.

\begin{figure}[ht]
\!\includegraphics[width=0.4\textwidth]{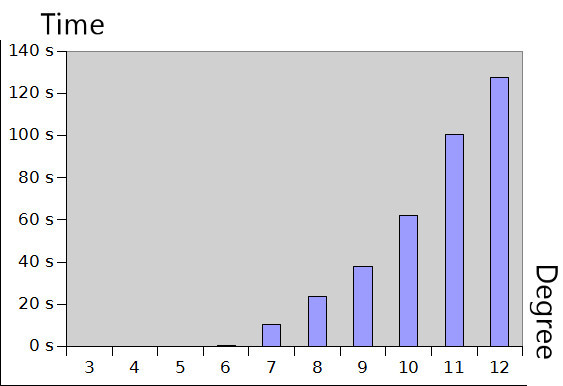}\qquad
\includegraphics[width=0.4\textwidth]{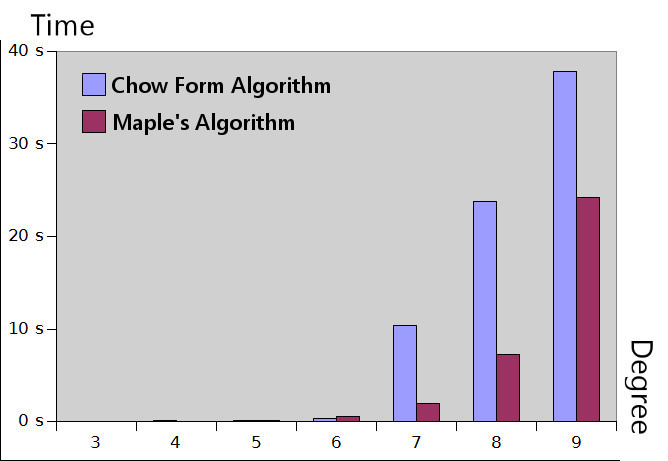}
\centering\caption{Runtime (sec) per output equation vs parametric degree.
Our method's times are in (light) blue on both graphs, Maple's native command's are in (dark) red.
\label{F3Efficiency}}
\end{figure}

Though the runtime per equation is lower, the number of implicit equations that
\texttt{implicitize} outputs cannot be predicted and it seems to be only dependent on the input degree:
for inputs of the same degree it always returns the same number of implicit equations.
For degrees larger than 8, their number is greater than 100.
In contrast, our algorithm can return the implicit equations one by one, and
Thm.~\ref{Tthreesurf} guarantees that~3 of them 
define the curve set-theoretically.
For parameterizations of degree up to~4, \texttt{implicitize} computes a small number of 
implicit equations whose total degree
is lower than the degree of those computed by our method. 
On the other hand, our algorithm always outputs equations of the same total degree equal 
to the degree of the parameterization, by Lem.~\ref{Lsurfacedegree}.
For instance, when the degree of the parameterization is 4,
\texttt{implicitize} typically returns~7 equations
of degree~3 and~11 equations of degree~4,
whereas our algorithm returns~3 equations of degree~4.
For Examp.~\ref{Exam:2cylinders}, \texttt{implicitize} computes the two equations in \eqref{Eq:2cylinders}.
This cannot be done by our algorithm unless we know which apex $G$ for the conical
surface should be used: for this example these are points at infinity.

The largest example that we computed is of a parametric space curve of degree 18 in less than 3 min.

We also compared our method against implicitization using Groebner basis and the method of 
matrix representations based on syzygies \cite{BusLBa12} using the authors' Maple code.
For parameterizations of degree up to 6 all methods are comparable in running time.
For larger degrees Groebner basis outperform our method by a factor that depends on the degree and the number of monomials 
in the parametric polynomials and ranges from 10 to $10^3$. 
The method based on syzygies yields a matrix $M$ which has the drop of rank property and, moreover,
its minors of size $\Rank(M)$ give a system of implicit equations of the variety. However, computing these
minors, unlike the computation of $M$, is very inefficient and it is not in the spirit of this method. 
On the upside, our algorithm always outputs 3 implicit equations, while in both other methods their number depends on the input 
and is typically much larger than 3.

Our algorithm was also implemented in Sage with similar runtimes as in Maple.


\section{Future work} \label{Sfuture}

Interesting mathematical questions arise in studying the complexity
of our method, namely how to bound the degree and the number of computed conical implicit surfaces
in the general case by results similar to Thm.~\ref{Tthreesurf}. This requires to study the extraneous
factors of the computed equations and to distinguish them into those coming from the chosen method to compute 
the resultant, and those inherited in our method.

Interpolation matrices
extend to other geometric operations such as surface-surface
intersection, assuming one surface is either
parameterized or given by a point cloud, and the second is parameterized.
The first surface is captured by matrix $M'$ through a point
sample. The parameterization of the
second surface is substituted into the last row $S(x)$.
Now $\det M$ is an implicit equation of the intersection curve,
defined in the parametric plane of the second surface.
We may avoid developing the determinant
but instead we may operate on the matrix.
For instance, we can sample the intersection curve by fixing values of
one parameter and solving the resulting univariate polynomial
$\det [ M' | S(t) ]$ in the other parameter $t$, much
like we did with ray shooting.

The interpolation matrices seem to extend beyond the codimension 1 case:
given a variety of codimension  $>1$ 
(i.e., in~\eqref{Eaff_parametrization} we have $t=(t_1,\ldots,t_d)$ with $d \le n-2$), 
we can construct a matrix $M(x)$ as in Sect.\ref{Smatrixmethod}
that still ``represents'' the variety, under some assumptions on the support $S$ used:
point $q\in \mathbb{C}^n$ lies in the variety iff $\Rank(M(q)) = \Rank(M')$.
However, in that situation, matrix $M'$ is never of full rank.
The implicit equations of the variety can be found among the non-zero 
minors of $M(x)$ of size $\Rank(M')$, but this computation is very expensive. 
Similar properties hold for the matrix representations in~\cite{Buse14}
but these matrices are smaller in size.

%
%
\section*{Acknowledgments}
IE, CK, CL belong to team AROMATH, joint between
INRIA Sophia-Antipolis and NKUA; their
work is partially supported as part of Project 
ARCADES that has received funding from the European Union's Horizon 2020 
research and innovation programme under the Marie Sklodowska-Curie grant agreement No 675789.
We thank L.Bus\'e and J.R.Sendra for discussions.

\end{document}